\documentclass[reqno]{amsart}
\usepackage{amssymb,amsmath,amsthm,xcolor,enumerate,hyperref,cleveref}

\synctex=1
\newtheorem{thrm}{Theorem}[section]
\newtheorem{cor}[thrm]{Corollary}
\newtheorem{lem}[thrm]{Lemma}

\theoremstyle{definition}
\newtheorem{defn}[thrm]{Definition}

\newtheorem{rem}[thrm]{Remark}

\crefrangeformat{equation}{#3(#1)#4--#5(#2)#6}

\crefname{thrm}{Theorem}{Theorems}
\crefname{lem}{Lemma}{Lemmas}
\crefname{cor}{Corollary}{Corollaries}
\crefname{prop}{Proposition}{Propositions}
\crefname{defn}{Definition}{Definitions}
\crefname{exm}{Example}{Examples}
\crefname{rem}{Remark}{Remarks}
\crefname{section}{Section}{Sections}
\crefname{equation}{\unskip}{\unskip}
\crefname{enumi}{\unskip}{\unskip}

\newcommand{\id}{\mathrm{id}}

\newcommand{\af}{\alpha}

\newcommand{\Dt}{\Delta}
\newcommand{\e}{\epsilon}
\newcommand{\s}{\sigma}

\newcommand{\A}{\mathcal{A}}

\newcommand{\ad}{\mathrm{ad}}

\newcommand{\m}{^{-1}}
\newcommand{\tl}{\tilde}

\newcommand{\sst}{\subseteq}

\newcommand{\hder}[1]{\operatorname{\mathrm{HDer}}{#1}}
\newcommand{\ihder}[1]{\operatorname{\mathrm{IHDer}}{#1}}

\newcommand{\hd}[1]{\{{#1}_n\}_{n=0}^\infty}

\newcommand{\NN}{\mathbb N}

\begin{document}
	
	\title[Higher Derivations of Finitary Incidence Algebras]{Higher Derivations\\ of Finitary Incidence Algebras}
	
	\author{Ivan Kaygorodov}
	\address{CMCC, Universidade Federal do ABC, Santo Andr\'e, Brazil}
	\email{kaygorodov.ivan@gmail.com}
	
	\author{Mykola Khrypchenko}
	\address{Departamento de Matem\'atica, Universidade Federal de Santa Catarina,  Campus Reitor Jo\~ao David Ferreira Lima, Florian\'opolis --- SC, CEP: 88040--900, Brazil}
	\email{nskhripchenko@gmail.com}
	
	\author{Feng Wei}
	\address{School of Mathematics and Statistics, Beijing Institute of Technology, Beijing 100081, P. R. China}
	\email{daoshuo@hotmail.com, daoshuowei@gmail.com}
	
	\begin{abstract}
		Let $P$ be a partially ordered set, $R$ a commutative unital ring and $FI(P,R)$ the finitary incidence algebra of $P$ over $R$. We prove that each $R$-linear higher derivation of $FI(P,R)$ decomposes into the product of an inner higher derivation of $FI(P,R)$ and the higher derivation of $FI(P,R)$ induced by a higher transitive map on the set of segments of $P$.
	\end{abstract}
	
	\subjclass[2010]{Primary 16S50, 16W25; Secondary 16G20, 06A11}
	
	\keywords{Finitary incidence algebra, higher derivation, inner higher derivation, higher transitive map}
	
	\thanks{This work is partially supported by the Training Program of International Exchange and Cooperation of the Beijing Institute of Technology
		and RFBR 17-01-00258. The authors would like to thank the International Affair Office of Beijing Institute of Technology for its kind consideration and warm help.}
	
	\maketitle
	
	\section*{Introduction}\label{intro}
	
	Let $(P,\le)$ be a preordered set and $R$ be a commutative unital ring. 
	Assume that $P$ is locally finite, i.e. for any $x\le y$ in $P$ there are only finitely many $z\in P$ such that $x\le z\le y$.
	The {\it incidence algebra}~\cite{SpiegelDonnell} $I(P,R)$ of $P$ over $R$ is the set of functions
	$$
	\{f: P\times P\to R\mid f(x,y)=0\ \text{if}\ x\not\le y\}
	$$
	with the natural structure of an $R$-module and multiplication given by the convolution
	\begin{align*}
	(fg)(x,y)&=\sum_{x\le z\le y}f(x,z)g(z,y)
	\end{align*}
	for all $f,g\in I(P,R)$ and $x,y\in P$. It would be helpful to point out that the full matrix algebra $M_n(R)$ as well as the upper triangular matrix algebra $T_n(R)$ are particular cases of incidence algebras. In addition, in the theory of operator algebras, the incidence algebra $I(P,R)$ of a finite poset $P$ is referred to as a bigraph algebra or a finite dimensional commutative subspace lattice algebra.

	Incidence algebras appeared in the early work by Ward~\cite{Ward} as generalized algebras of arithmetic functions. Later, they were extensively used as the fundamental tool of enumerative combinatorics in the series of works ``On the foundations of combinatorial theory''~\cite{Rota64,Mullin-Rota70,Goldman-Rota70,Doubilet-Rota-Stanley72} (see also the monograph~\cite{Stanley}). The study of algebraic mappings on incidence algebras was initiated by Stanley~\cite{Stanley70}. Since then, automorphisms, involutions, derivations (and their generalizations) on incidence algebras have been actively investigated, see \cite{Baclawski72, Spiegel93, Coelho-Polcino93, Coelho93, Coelho94, Koppinen95, Spiegel01, Kh-aut, Kh-der, Dugas12, Xiao15, Khr16, Zhang-Khrypchenko, Khr18-loc, BFK17, CDH18} and the references therein. 
	
	There are many interesting generalizations of derivations (see, for example, \cite{kp1,kp2} and their references), one of them being a higher derivation. Higher derivations are an active subject of research in (not necessarily associative or commutative) algebra. Firstly, higher
	derivations have close relationship with derivations. It should be remarked that the first component $d_1$ of each higher derivation $D=\{d_n\}_{n=0}^\infty$ of an algebra $\A$ is itself a derivation of $\A$. Conversely, let $d: \A\to\A$ be an ordinary derivation of an algebra $\A$ over a field of characteristic zero, then $D=\{\frac{1}{n!}d^n\}_{n=0}^\infty$ is a higher derivation of $\A$. Heerema~\cite{Heerema70}, Mirzavaziri~\cite{Mirzavaziri10} and Saymeh~\cite{Saymeh86} independently proved that each higher derivation of an algebra $\A$ over a field of characteristic zero is a combination of compositions of derivations, and hence one can characterize all higher derivations on $\A$ in terms of the derivations on $\A$. Ribenboim systemically studied higher derivations of arbitrary rings and those of arbitrary modules in~\cite{Rib71I, Rib71II}, where some familiar properties of derivations are generalized to the case of higher derivations. Ferrero and Haetinger found in~\cite{Ferr-Haet02b} the conditions under which Jordan higher derivations (or Jordan triple higher derivations) of a $2$-torsion-free (semi-)prime ring are higher derivations, and in~\cite{Ferr-Haet02a} the same authors studied higher derivations on (semi-)prime rings satisfying linear relations. Wei and Xiao \cite{Wei-Xiao11} described higher derivations of triangular algebras and related mappings, such as inner higher derivations, Jordan higher derivations, Jordan triple higher derivations and their generalizations.  
	
	The objective of this paper is to investigate higher derivations of finitary incidence algebras. Many researchers have made
	substantial contributions to the additive mapping theory of incidence algebras. Baclawski~\cite{Baclawski72} studied the automorphisms and derivations of incidence algebras $I(P,R)$ when $P$ is a locally finite partially ordered set. 
	In particular, he proved that every derivation of $I(P,R)$ can be decomposed as a sum of an inner derivation 
	and a derivation induced by a transitive map. Koppinen~\cite{Koppinen95} extended these results to the incidence algebras $I(P,R)$ with $P$ being a locally finite pre-ordered set. In~\cite{Xiao15}, Xiao characterized the derivations of $I(P,R)$ by a direct computation. Based on such a characterization of derivations, he proved that every Jordan derivation of $I(P,R)$ is a derivation provided that $R$ is $2$-torsion-free. Zhang and Khrypchenko~\cite{Zhang-Khrypchenko} considered Lie derivations of incidence algebras over $2$-torsion-free commutative unital rings. They proved that each Lie derivation $L$ of $I(P,R)$ can be represented as $L=D+F$, where $D$ is a derivation of $I(P,R)$ and $F$ is a linear mapping from $I(P,R)$ to its center. 
	
	More recently, special attention has been paid to additive mappings on finitary incidence algebras. Brusamarello, Fornaroli and Khrypchenko proved in~\cite{BFK17} that each $R$-linear Jordan isomorphism of the finitary incidence algebra $FI(P,R)$ of a partially ordered set $P$ over a $2$-torsion-free commutative unital ring $R$ onto an $R$-algebra $\A$ is the near-sum of a homomorphism and an anti-homomorphism. Brusamarello, Fornaroli and Santulo showed in~\cite{BFS12} that the finitary incidence algebra of an arbitrary poset $P$ over a field $K$ has an anti-automorphism (involution) if and only if $P$ has an anti-automorphism (involution). A decomposition theorem for such involutions was obtained in~\cite{BFS14}. Khrypchenko of the current article proved in~\cite{Khr18-loc} that each $R$-linear local derivation of the finitary incidence algebra $FI(P,R)$ of a poset $P$ over a commutative unital ring $R$ is a derivation, generalizing (partially) a result by Nowicki and Nowosad~\cite{Nowicki-Nowosad04}.
	
	The structure of this paper is as follows. In~\cref{prelim-sec} we collect some basic facts about higher derivations and finitary incidence algebras. These are used in~\cref{hider-FI-sec} to prove our main result \cref{d=Dt_rho*tl-sigma}.

	\section{Preliminaries}\label{prelim-sec}
	
	\subsection{Higher derivations}\label{hder-sec}
	Let $R$ be a ring. A sequence $d=\hd d$ of additive maps $R\to R$ is a \emph{higher derivation} of $R$ \emph{(of infinite order)}, if it satisfies
	\begin{enumerate}
		\item $d_0=\id_R$;
		\item $d_n(rs)=\sum_{i+j=n}d_i(r)d_j(s)$\label{d_n(rs)=sum-d_i(r)d_j(s)}
	\end{enumerate}
	for all $n\in\NN=\{1,2,\dots\}$ and $r,s\in R$. If \cref{d_n(rs)=sum-d_i(r)d_j(s)} holds for all $1\le n\le N$, then the sequence $\{d_n\}_{n=0}^N$ is called a \emph{higher derivation of order $N$}. Evidently, $\hd d$ is a higher derivation if and only if $\{d_n\}_{n=0}^N$ is a higher derivation of order $N$ for all $N\in\NN$. In particular, $d_1$ is always a usual derivation of $R$. 
	
	Denote by $\hder R$ the set of higher derivations of $R$ and consider the following operation on $\hder R$
	\begin{align}\label{d'-ast-d''-def}
	(d'*d'')_n=\sum_{i+j=n}d'_i\circ d''_j.
	\end{align}
	In particular, 
	\begin{align}\label{(d'-ast-d'')_1=d'_1+d''_1}
	(d'*d'')_1=d'_1+d''_1.
	\end{align}
	It was proved in~\cite{Heerema68} that $\hder R$ forms a group with respect to $*$, whose identity is the sequence $\hd \e$ with $\e_0=\id_R$ and 
	\begin{align}\label{e_i=zero}
	\e_n=0
	\end{align}
	for $n\in\NN$. 
	
	Given $r\in R$ and $k\in\NN$, define
	\begin{align}
	[r,k]_0&=\id_R,\label{[r_k]_0(x)-def}\\
	[r,k]_n(x)&=\begin{cases}
	0, & k\nmid n,\\
	r^lx-r^{l-1}xr, & n=kl,
	\end{cases}\label{[r_k]_n(x)-def}
	\end{align}
	for all $n\in\NN$ and $x\in R$. It was proved in \cite{Nowicki84-ider} that $[r,k]:=\hd{[r,k]}\in\hder R$, so that for any sequence $r=\{r_n\}_{n=1}^\infty\sst R$ one may define $\hd{(\Dt_r)}$ by means of $(\Dt_r)_0=\id_R$ and
	\begin{align}\label{(Dt_r)_n=([r_1_1]...[r_n_n])_n}
	(\Dt_r)_n=([r_1,1]*\dots *[r_n,n])_n,
	\end{align}
	where $n\in\NN$. Higher derivations of the form $\Dt_r$ will be called \emph{inner}. By \cite[Corollary 3.3]{Nowicki84-ider} the set of inner higher derivations forms a normal subgroup in $\hder R$, which will be denoted by $\ihder R$. In particular,
	\begin{align}\label{(Dt_r)_1-is-ad_(r_1)}
	(\Dt_r)_1(x)=[r_1,1]_1(x)=r_1x-xr_1
	\end{align} 
	is the usual inner derivation of $R$ associated with $r_1\in R$, which we denote by $\ad_{r_1}$.
	
	We shall begin with some formulas which were used in \cite{Nowicki84-der} without any proof.
	
	\begin{lem}\label{formulas-for-[r_1_1]...[r_n_n]_k}
		Let $n,k\in\NN$, such that $k<n$. Then for all $r\in R$
		\begin{align}\label{([r_n]^(-1))_k=0}
		([r,n]\m)_k=0.
		\end{align}
		Moreover, for any $\{r_n\}_{n=1}^\infty\sst R$
		\begin{enumerate}
			\item $([r_1,1]*\dots *[r_n,n])_k=([r_1,1]*\dots *[r_k,k])_k$;\label{[r_1_1]...[r_n_n]_k}
			\item $(([r_1,1]*\dots *[r_n,n])\m)_k=(([r_1,1]*\dots *[r_k,k])\m)_k$.\label{[r_1_1]...[r_n_n]^(-1)_k}
		\end{enumerate}
	\end{lem}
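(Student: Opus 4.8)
The plan is to prove all three assertions by induction on $k$, exploiting the recursive structure of the $*$-product and the very explicit form of the maps $[r,n]_m$. For the first formula, fix $n$ and recall that $[r,n]*[r,n]\m=\e$, so $([r,n]\m)_0=\id_R$ and, for $1\le m$, $\sum_{i+j=m}[r,n]_i\circ([r,n]\m)_j=0$. When $1\le m<n$ we have $[r,n]_i=0$ for every $i$ with $1\le i\le m$ (since $n\nmid i$), hence the only surviving term is $[r,n]_0\circ([r,n]\m)_m=([r,n]\m)_m$, which must therefore vanish. This gives \cref{([r_n]^(-1))_k=0} for all $k<n$ at once.

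Next I would prove \cref{[r_1_1]...[r_n_n]_k} by downward comparison: writing $d'=[r_1,1]*\dots*[r_k,k]$ and $d''=[r_{k+1},k+1]*\dots*[r_n,n]$, we have $[r_1,1]*\dots*[r_n,n]=d'*d''$, so $([r_1,1]*\dots*[r_n,n])_k=\sum_{i+j=k}d'_i\circ d''_j$. Each $d''_j$ is a $*$-product of the maps $[r_{k+1},k+1],\dots,[r_n,n]$, and an easy induction using \cref{d'-ast-d''-def} shows that $d''_j$ is an $R$-linear combination of composites of the $[r_m,\cdot]_{i_m}$ with $\sum i_m=j$ and each $i_m$ a multiple of the corresponding $m\ge k+1$; since $0<j\le k<k+1$ forces every such $i_m=0$, we get $d''_j=0$ for $1\le j\le k$ and $d''_0=\id_R$. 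Hence $([r_1,1]*\dots*[r_n,n])_k=d'_k\circ\id_R=d'_k=([r_1,1]*\dots*[r_k,k])_k$, which is \cref{[r_1_1]...[r_n_n]_k}. (Alternatively one can run this as an induction on $n-k$, peeling off one factor $[r_n,n]$ at a time and using \cref{([r_n]^(-1))_k=0} to control the inverse of the tail — this is essentially the same computation.)

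Finally \cref{[r_1_1]...[r_n_n]^(-1)_k} follows from \cref{[r_1_1]...[r_n_n]_k} by a general principle: if $d,e\in\hder R$ agree in all components up to index $k$, i.e. $d_m=e_m$ for $0\le m\le k$, then $d\m$ and $e\m$ also agree up to index $k$. Indeed, $(d*d\m)_m=\e_m$ and $(e*e\m)_m=\e_m$ for $0\le m\le k$, and one shows $d\m_m=e\m_m$ by induction on $m$: the relation $\sum_{i+j=m}d_i\circ d\m_j=\e_m$ expresses $d\m_m$ in terms of $d_0=\id_R$, the lower $d\m_j$ ($j<m$), and the $d_i$ with $i\le m\le k$, and the analogous relation holds for $e$ with the identical data by the inductive hypothesis and $d_i=e_i$. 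Applying this with $d=[r_1,1]*\dots*[r_n,n]$ and $e=[r_1,1]*\dots*[r_k,k]$, which agree up to index $k$ by \cref{[r_1_1]...[r_n_n]_k}, yields \cref{[r_1_1]...[r_n_n]^(-1)_k}.

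The only mildly delicate point — and the one I would spell out carefully — is the bookkeeping in the second step: one must check that iterating \cref{d'-ast-d''-def} really does express $d''_j$ as a combination of composites $[r_{m_1},\cdot]_{i_1}\circ\dots\circ[r_{m_t},\cdot]_{i_t}$ with $\sum i_s=j$ and $i_s$ divisible by $m_s$, so that the constraint $j\le k<k+1\le m_s$ forces each $i_s=0$ and hence $d''_j=0$. Everything else is a routine unravelling of the definitions \cref{d'-ast-d''-def,[r_k]_0(x)-def,[r_k]_n(x)-def,e_i=zero}.
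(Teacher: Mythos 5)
Your proof is correct. For the first formula and item (i) you follow essentially the paper's route: the paper likewise obtains $([r,n]\m)_k=0$ by expanding $\e_k=([r,n]*[r,n]\m)_k$ and observing that only the $i=0$ term survives, and it proves (i) by writing $([r_1,1]*\dots *[r_n,n])_k$ as the iterated sum $\sum_{i_1+\dots+i_n=k}[r_1,1]_{i_1}\circ\dots\circ[r_n,n]_{i_n}$ and peeling off the last factor by induction; your grouping of the whole tail $d''=[r_{k+1},k+1]*\dots*[r_n,n]$ and showing $d''_j=0$ for $1\le j\le k$ is the same computation packaged differently, and the bookkeeping point you flag is exactly what the paper settles by displaying that iterated-sum formula. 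Where you genuinely diverge is item (ii): the paper writes $(([r_1,1]*\dots*[r_n,n])\m)_k=([r_n,n]\m*\dots*[r_1,1]\m)_k$, i.e.\ it invokes the anti-homomorphism property of inversion in the group $(\hder R,*)$ and then reuses the first formula to annihilate the factors $([r_m,m]\m)_{i_m}$ with $m>k$, finishing as in (i). You instead prove the general principle that two higher derivations agreeing in components $0,\dots,k$ have $*$-inverses agreeing in components $0,\dots,k$, by solving the recursion $\sum_{i+j=m}d_i\circ(d\m)_j=\e_m$ for $(d\m)_m$ and inducting on $m$; applied to the two products, which agree up to index $k$ by (i), this gives (ii). Both arguments are valid; yours deduces (ii) formally from (i) without needing the reversal identity for $*$-inverses (which the paper uses without comment), at the cost of one extra induction, and the principle you isolate is reusable. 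No gaps.
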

	\begin{proof}
		Since $[r,n]*[r,n]\m=\e$, it follows from \cref{d'-ast-d''-def,e_i=zero} that
		\begin{align}\label{sum-[r_n]_i-circ-[r_n]^(-1)_j}
		0=\e_k=([r,n]*[r,n]\m)_k=\sum_{i+j=k}[r,n]_i\circ([r,n]\m)_j.
		\end{align}
		But $i\le k<n$, so $[r,n]_i$ is non-zero only for $i=0$, in which case $[r,n]_i=\id_R$. Thus, the sum in \cref{sum-[r_n]_i-circ-[r_n]^(-1)_j} coincides with $([r,n]\m)_k$.
		
		\emph{Item \cref{[r_1_1]...[r_n_n]_k}.} By \cref{d'-ast-d''-def} one has
		\begin{align}\label{([r_1_1]...[r_n_n])_k=sum}
		([r_1,1]*\dots *[r_n,n])_k=\sum_{i_1+\dots+i_n=k}[r_1,1]_{i_1}\circ\dots \circ[r_n,n]_{i_n}.
		\end{align}
		Observe that $i_n\le k<n$ and hence $n\nmid i_n$, whenever $i_n\ne 0$. Then according to \cref{[r_k]_n(x)-def,[r_k]_0(x)-def} the map $[r_n,n]_{i_n}$ is non-zero only for $i_n=0$, and in this case $[r_n,n]_{i_n}=\id_R$. Consequently, the right-hand side of \cref{([r_1_1]...[r_n_n])_k=sum} equals
		\begin{align*}
		\sum_{i_1+\dots+i_{n-1}=k}[r_1,1]_{i_1}\circ\dots \circ[r_{n-1},{n-1}]_{i_{n-1}}=([r_1,1]*\dots *[r_{n-1},{n-1}])_k,
		\end{align*}
		and thus \cref{[r_1_1]...[r_n_n]_k} follows by applying the obvious induction argument.
		
		\emph{Item \cref{[r_1_1]...[r_n_n]^(-1)_k}.} As above, we see from \cref{d'-ast-d''-def} that
		\begin{align*}
		(([r_1,1]*\dots *[r_n,n])\m)_k&=([r_n,n]\m*\dots*[r_1,1]\m)_k\\
		&=\sum_{i_1+\dots+i_n=k}([r_n,n]\m)_{i_n}\circ\dots \circ([r_1,1]\m)_{i_1}.
		\end{align*}
		Now $([r_n,n]\m)_{i_n}=0$, except for the case $i_n=0$ in view of \cref{([r_n]^(-1))_k=0}. The rest of the proof follows as in \cref{[r_1_1]...[r_n_n]_k}.
	\end{proof}
	
	\begin{cor}\label{[r_n]^(-1)_n=-ad_r}
		For all $n\in\NN$ and $r\in R$
		\begin{align}\label{[r_n]^(-1)_n=-[r_n]_n=-ad_r}
		([r,n]\m)_n=-[r,n]_n=-\ad_r.
		\end{align}
	\end{cor}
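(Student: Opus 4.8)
The plan is to obtain both equalities directly from the definitions, in the spirit of the proof of \cref{formulas-for-[r_1_1]...[r_n_n]_k}. The middle equality is immediate: taking $k=n$ and $l=1$ in \cref{[r_k]_n(x)-def} gives $[r,n]_n(x)=r^1x-r^0xr=rx-xr=\ad_r(x)$, so $-[r,n]_n=-\ad_r$ and it only remains to show $([r,n]\m)_n=-[r,n]_n$.

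For that equality I would expand the $n$-th component of the group relation $[r,n]*[r,n]\m=\e$. Using \cref{d'-ast-d''-def} together with \cref{e_i=zero},
\[
0=\e_n=([r,n]*[r,n]\m)_n=\sum_{i+j=n}[r,n]_i\circ([r,n]\m)_j .
\]
Since $0\le i\le n$, the factor $[r,n]_i$ vanishes unless $n\mid i$, i.e.\ unless $i=0$ or $i=n$, by \cref{[r_k]_n(x)-def,[r_k]_0(x)-def}. The summand with $i=0$ equals $\id_R\circ([r,n]\m)_n=([r,n]\m)_n$, while the summand with $i=n$ equals $[r,n]_n\circ([r,n]\m)_0=[r,n]_n$ because $([r,n]\m)_0=\id_R$; every summand with $0<i<n$ is zero. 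Hence $0=([r,n]\m)_n+[r,n]_n$, which is exactly the assertion.

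I do not expect any genuine obstacle: the whole argument is the observation that, among the components of $[r,n]$, only the $0$-th and the $n$-th are relevant to the $n$-th convolution slot. The single point deserving a word of care is that, unlike the case $k<n$ treated in \cref{([r_n]^(-1))_k=0} where only the $i=0$ end of the sum $\sum_{i+j=k}$ survives, for $k=n$ \emph{both} ends ($i=0$ and $i=n$) contribute, so the cancellation leaves $-\ad_r$ rather than $0$.
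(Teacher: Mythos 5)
Your proof is correct and follows essentially the same route as the paper: both expand $0=\e_n=([r,n]*[r,n]\m)_n$ via \cref{d'-ast-d''-def} and observe that only the two boundary terms of the convolution survive. The only (immaterial) difference is that you kill the middle terms by noting $[r,n]_i=0$ for $0<i<n$ from the definition \cref{[r_k]_n(x)-def}, whereas the paper instead invokes $([r,n]\m)_j=0$ for $0<j<n$ from \cref{([r_n]^(-1))_k=0}.
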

	\begin{proof}
		Indeed,
		\begin{align*}
		0=\e_n=([r,n]*[r,n]\m)_n=\sum_{i+j=n}[r,n]_i\circ([r,n]\m)_j=[r,n]_n+([r,n]\m)_n
		\end{align*}
		by \cref{([r_n]^(-1))_k=0}, whence \cref{[r_n]^(-1)_n=-[r_n]_n=-ad_r} in view of \cref{[r_k]_n(x)-def}.
	\end{proof}
	
	\begin{lem}\label{d^(r)-properties}
		Let $d\in\hder R$, $r=\{r_n\}_{n=1}^\infty\sst R$ and $k\in\NN$. Define $d^{(r,k)}\in\hder R$ as being
		\begin{align}\label{d^(r_k)=([r_1_1]...[r_k_k])^(-1)d}
		d^{(r,k)}=([r_1,1]*\dots *[r_k,k])\m*d.
		\end{align}
		Moreover, set $d^{(r)}_0=\id_R$ and for all $n\in\NN$
		\begin{align}\label{d^(r)_n=d^(r_n)_n}
		d^{(r)}_n=d^{(r,n)}_n.
		\end{align}
		Then 
		\begin{enumerate}
			\item $d^{(r,k)}_l=d^{(r,l)}_l=d^{(r)}_l$ for all $1\le l\le k$;\label{d^(r_k)_l=d^(r_l)_l}
			\item $d^{(r)}\in\hder R$;\label{d^(r)-is-hder}
			\item $d=\Dt_r*d^{(r)}$.\label{d=Dt_r-ast-d^(r)}
		\end{enumerate}
	\end{lem}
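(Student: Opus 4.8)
The plan is to rest everything on a single observation already contained in \cref{formulas-for-[r_1_1]...[r_n_n]_k}: the $i$-th component of the finite product $[r_1,1]*\dots*[r_m,m]$, and of its inverse, \emph{stabilises} as soon as $m\ge i$. Writing $P_m:=[r_1,1]*\dots*[r_m,m]$ for brevity, items \cref{[r_1_1]...[r_n_n]_k} and \cref{[r_1_1]...[r_n_n]^(-1)_k} of \cref{formulas-for-[r_1_1]...[r_n_n]_k} say precisely that $(P_m)_i=(P_i)_i$ and $((P_m)\m)_i=((P_i)\m)_i$ whenever $m\ge i$; combined with \cref{(Dt_r)_n=([r_1_1]...[r_n_n])_n} this also gives $(\Dt_r)_i=(P_m)_i$ for every $m\ge i$ (the case $i=0$ being trivial). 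The upshot is that in any degree-$n$ computation, all components of $\Dt_r$, of the $P_m$'s, and of the $(P_m)\m$'s that occur can be replaced by the corresponding components of the one finite product $P_n$.

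For item \cref{d^(r_k)_l=d^(r_l)_l}, I would expand, using \cref{d^(r_k)=([r_1_1]...[r_k_k])^(-1)d} and \cref{d'-ast-d''-def},
\[
d^{(r,k)}_l=\sum_{i+j=l}((P_k)\m)_i\circ d_j,\qquad 1\le l\le k,
\]
and note that in every summand $i\le l\le k$, so by the stabilisation identity $((P_k)\m)_i=((P_i)\m)_i=((P_l)\m)_i$. Replacing $((P_k)\m)_i$ by $((P_l)\m)_i$ term by term turns the right-hand side into $d^{(r,l)}_l$, and $d^{(r,l)}_l=d^{(r)}_l$ holds by \cref{d^(r)_n=d^(r_n)_n}.

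Item \cref{d^(r)-is-hder} is then formal. Fixing $N\in\NN$, the sequence $d^{(r,N)}=(P_N)\m*d$ lies in $\hder R$ because $\hder R$ is a group under $*$ and each $[r_i,i]\in\hder R$; by item \cref{d^(r_k)_l=d^(r_l)_l} its initial segment $\{d^{(r,N)}_n\}_{n=0}^N$ coincides with $\{d^{(r)}_n\}_{n=0}^N$, which is therefore a higher derivation of order $N$. Since $N$ is arbitrary and a sequence is a higher derivation precisely when each initial segment $\{d_n\}_{n=0}^N$ is a higher derivation of order $N$ (as recalled in \cref{hder-sec}), we get $d^{(r)}\in\hder R$. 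For item \cref{d=Dt_r-ast-d^(r)}, fix $n\in\NN$ (the case $n=0$ is immediate) and compute by \cref{d'-ast-d''-def}
\[
(\Dt_r*d^{(r)})_n=\sum_{i+j=n}(\Dt_r)_i\circ d^{(r)}_j.
\]
In each summand $i,j\le n$, so $(\Dt_r)_i=(P_n)_i$ by the remark above while $d^{(r)}_j=d^{(r,n)}_j=((P_n)\m*d)_j$ by item \cref{d^(r_k)_l=d^(r_l)_l} together with \cref{d^(r_k)=([r_1_1]...[r_k_k])^(-1)d}. Hence the sum equals $\bigl(P_n*((P_n)\m*d)\bigr)_n=\bigl((P_n*(P_n)\m)*d\bigr)_n=(\e*d)_n=d_n$, using associativity of $*$ and $P_n*(P_n)\m=\e$.

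The one genuine point requiring care --- and thus the main obstacle --- is the index bookkeeping underlying items \cref{d^(r_k)_l=d^(r_l)_l} and \cref{d=Dt_r-ast-d^(r)}: one must be confident that, in a fixed degree, the components of the infinite sequences $\Dt_r$ and $(P_m)\m$ that actually enter the sums agree with the corresponding components of $P_n$ and $(P_n)\m$, so that the entire statement reduces to the trivial group identity $P_n*(P_n)\m=\e$ in $\hder R$. Everything beyond this is a routine expansion of the definition of the product $*$.
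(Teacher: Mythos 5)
Your proposal is correct and follows essentially the same route as the paper: all three items rest on the stabilisation identities of \cref{formulas-for-[r_1_1]...[r_n_n]_k} together with the group structure of $\hder R$. Your treatment of item \cref{d=Dt_r-ast-d^(r)} is a slightly more streamlined phrasing of the paper's computation --- where the paper expands $(\Dt_r*d^{(r)})_n$ into a triple sum, re-indexes, and recognises the inner sums as $\e_{n-k}$, you replace every component by the corresponding component of the single finite product $P_n$ and invoke associativity and $P_n*(P_n)\m=\e$ directly --- but the underlying argument is identical.
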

	\begin{proof}
		For \cref{d^(r_k)_l=d^(r_l)_l} observe from \cref{formulas-for-[r_1_1]...[r_n_n]_k}\cref{[r_1_1]...[r_n_n]^(-1)_k} that 
		\begin{align*}
		d^{(r,k)}_l&=(([r_1,1]*\dots *[r_k,k])\m*d)_l\\
		&=\sum_{i+j=l}(([r_1,1]*\dots *[r_k,k])\m)_i\circ d_j\\
		&=\sum_{i+j=l}(([r_1,1]*\dots *[r_l,l])\m)_i\circ d_j\\
		&=(([r_1,1]*\dots *[r_l,l])\m*d)_l\\
		&=d^{(r,l)}_l.
		\end{align*}
		Then \cref{d^(r)-is-hder} automatically follows from the fact that for each fixed $N\in\NN$ the sequence $\{d^{(r)}_n\}_{n=0}^N$ coincides with the first $N$ terms of the sequence $d^{(r,N)}$.
		
		Now using \cref{formulas-for-[r_1_1]...[r_n_n]_k}\cref{[r_1_1]...[r_n_n]^(-1)_k}, \cref{d'-ast-d''-def,(Dt_r)_n=([r_1_1]...[r_n_n])_n,d^(r_k)=([r_1_1]...[r_k_k])^(-1)d,d^(r)_n=d^(r_n)_n,e_i=zero} we obtain for all $n\in\NN$ that
		\begin{align*}
		(\Dt_r*d^{(r)})_n&=\sum_{i+j=n}(\Dt_r)_i\circ d^{(r)}_j\\
		&=\sum_{i+j=n}([r_1,1]*\dots *[r_i,i])_i\circ (([r_1,1]*\dots *[r_j,j])\m*d)_j\\
		&=\sum_{i+j=n}([r_1,1]*\dots *[r_i,i])_i\circ \left(\sum_{k+l=j}([r_1,1]*\dots *[r_j,j])\m)_k\circ d_l\right)\\ 
		&=\sum_{i+j=n}([r_1,1]*\dots *[r_i,i])_i\circ \left(\sum_{k+l=j}([r_1,1]*\dots *[r_k,k])\m)_k\circ d_l\right)\\ 
		&=\sum_{i+j+k=n}([r_1,1]*\dots *[r_i,i])_i\circ (([r_1,1]*\dots *[r_j,j])\m)_j\circ d_k\\ 
		&=\sum_{k=0}^n\left(\sum_{i+j=n-k}([r_1,1]*\dots *[r_i,i])_i\circ (([r_1,1]*\dots *[r_j,j])\m)_j\right)\circ d_k\\		
		&=\sum_{k=0}^n\e_{n-k}\circ d_k\\
		&=d_n,
		\end{align*}
		and thus \cref{d=Dt_r-ast-d^(r)} holds.
	\end{proof}
	
	\subsection{Finitary incidence algebra}\label{FI-sec}
	Let $P$ be a poset and $R$ a commutative unital ring. Recall from \cite{Khripchenko-Novikov09} that a \emph{finitary series} is a formal sum of the form
	\begin{align}\label{formal-sum-in-I(P_R)}
	\alpha=\sum_{x\le y}\af_{xy}e_{xy},
	\end{align}
	where $x,y\in P$, $\af_{xy}\in R$ and $e_{xy}$ is a symbol, such that for any pair $x<y$ there exists only a finite number of $x\le u<v\le y$ with $\af_{uv}\ne 0$. The set of finitary series, denoted by $FI(P,R)$, possesses the natural structure of an $R$-module.  Moreover, it is closed under the convolution
	\begin{align}\label{conv-in-I(P_R)}
	\alpha\beta=\sum_{x\le y}\left(\sum_{x\le z\le y}\alpha_{xz}\beta_{zy}\right)e_{xy}.
	\end{align}
	Thus, $FI(P,R)$ is an algebra, called the \emph{finitary incidence algebra} of $P$ over $R$. The identity element of $FI(P,R)$ is the series $\delta=\sum_{x\in P}1_Re_{xx}$. Here and in what follows we adopt the next convention. If in~\cref{formal-sum-in-I(P_R)} the indices run through a subset $X$ of the ordered pairs $(x,y)$, $x,y\in P$, $x\le y$, then $\alpha_{xy}$ is meant to be zero for $(x,y)\not\in X$.
	
	Observe that
	\begin{align}\label{e_xy-cdot-e_uv}
	e_{xy}\cdot e_{uv}=
	\begin{cases}
	e_{xv}, & \mbox{if $y=u$},\\
	0, & \mbox{otherwise}.
	\end{cases}
	\end{align}
	In particular, the elements $e_x:=e_{xx}$, $x\in P$, are pairwise orthogonal idempotents of $FI(P,R)$, and for any $\af\in FI(P,R)$
	\begin{align}\label{e_x-alpha-e_y}
	e_x\af e_y=\begin{cases}
	\af_{xy}e_{xy}, & \mbox{ if }x\le y,\\
	0, & \mbox{ otherwise}.
	\end{cases}
	\end{align}
	
	Given $X\subseteq P$, we shall use the notation $e_X$ for the idempotent $\sum_{x\in X}1_R e_{xx}$. In particular, $e_x=e_{\{x\}}$. Note that $e_Xe_Y=e_{X\cap Y}$, so $e_xe_X=e_x$ for $x\in X$, and $e_xe_X=0$ otherwise.
	
	\section{Higher derivations of \texorpdfstring{$FI(P,R)$}{FI(P,R)}}\label{hider-FI-sec}
	
	\begin{lem}\label{hder-with-d(e_x)=0}
		Let $\{d_n\}_{n=0}^N$ be a higher derivation of $FI(P,R)$ of order $N\in\NN$, such that
		\begin{align}\label{d_e(e_x)-is-zero}
		d_n(e_x)=0
		\end{align}
		for all $x\in P$ and $1\le n<N$. Then for any $X\sst P$ and $x\in X$
		\begin{align}\label{d_N(e_x)-equals-e_xd_N(e_X)+d_N(e_x)e_X}
		d_N(e_x)=e_xd_N(e_X)+d_N(e_x)e_X.
		\end{align}
		In particular, for all $x<y$
		\begin{enumerate}
			\item $d_N(e_X)_{xy}=d_N(e_x)_{xy}$, if $x\in X$ and $y\not\in X$; \label{d_N(e_X)_xy-equas-d_N(e_x)_xy}
			\item $d_N(e_X)_{xy}=0_{xy}$, if $x,y\in X$. \label{d_N(e_X)_xy-equals-0_xy}
		\end{enumerate}
	\end{lem}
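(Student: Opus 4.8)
The plan is to get \cref{d_N(e_x)-equals-e_xd_N(e_X)+d_N(e_x)e_X} out of the single idempotent identity $e_xe_X=e_x$, which holds precisely because $x\in X$, and then to extract the two coefficient assertions by sandwiching that identity between $e_x$ and $e_y$.

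First I would observe that $e_X=\sum_{x\in X}1_Re_{xx}$ genuinely belongs to $FI(P,R)$ (it has no nonzero off-diagonal coefficient, so the finitary condition is vacuous) and that $e_xe_X=e_{\{x\}\cap X}=e_x$. Applying $d_N$ and property~\cref{d_n(rs)=sum-d_i(r)d_j(s)} of a higher derivation gives
\[
d_N(e_x)=d_N(e_xe_X)=\sum_{i+j=N}d_i(e_x)\,d_j(e_X).
\]
By the hypothesis \cref{d_e(e_x)-is-zero}, $d_i(e_x)=0$ whenever $1\le i\le N-1$, so only the summands $i=0$ and $i=N$ survive; since $d_0=\id_{FI(P,R)}$, these are $e_xd_N(e_X)$ and $d_N(e_x)e_X$ respectively, which is exactly \cref{d_N(e_x)-equals-e_xd_N(e_X)+d_N(e_x)e_X}.

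For the two items, fix $x<y$ and multiply \cref{d_N(e_x)-equals-e_xd_N(e_X)+d_N(e_x)e_X} on the left by $e_x$ and on the right by $e_y$. Using $e_xe_x=e_x$ and \cref{e_x-alpha-e_y}, the left-hand side becomes $d_N(e_x)_{xy}e_{xy}$, while the right-hand side becomes $d_N(e_X)_{xy}e_{xy}+e_xd_N(e_x)(e_Xe_y)$, where $e_Xe_y=e_{X\cap\{y\}}$ equals $e_y$ if $y\in X$ and $0$ otherwise. When $y\notin X$ the last summand vanishes, and comparing coefficients at $(x,y)$ yields $d_N(e_x)_{xy}=d_N(e_X)_{xy}$, which is \cref{d_N(e_X)_xy-equas-d_N(e_x)_xy}. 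When $x,y\in X$ the last summand equals $e_xd_N(e_x)e_y=d_N(e_x)_{xy}e_{xy}$, which cancels the left-hand side and leaves $d_N(e_X)_{xy}e_{xy}=0$, i.e.\ \cref{d_N(e_X)_xy-equals-0_xy}.

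I do not expect a genuine obstacle here: the whole argument is a short formal manipulation. The only points needing a little attention are keeping track of which terms of the Leibniz expansion survive under \cref{d_e(e_x)-is-zero} and correctly applying the convention that $e_Xe_y$ is either $e_y$ or $0$ according to whether $y\in X$; everything else is routine.
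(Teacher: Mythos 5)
Your proposal is correct and follows essentially the same route as the paper: apply the Leibniz rule to $e_x=e_xe_X$, observe that the hypothesis kills all middle terms, and then read off the two items by extracting the $(x,y)$-coefficient (your sandwiching by $e_x$ and $e_y$ is just the paper's ``take coefficients at $e_{xy}$'' phrased via \cref{e_x-alpha-e_y}). No gaps.
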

	\begin{proof}
		Since $e_x=e_x\cdot e_X$, we have
		\begin{align}
		d_N(e_x)&=\sum_{i+j=N}d_i(e_x)d_j(e_X)\notag\\
		&=e_xd_N(e_X)+d_N(e_x)e_X\notag\\
		&\quad+\sum_{i+j=N,\ i,j<N}d_i(e_x)d_j(e_X),\label{sum-d_i(e_x)d_j(e_X)}
		\end{align}
		the sum \cref{sum-d_i(e_x)d_j(e_X)} being zero by \cref{d_e(e_x)-is-zero}, whence \cref{d_N(e_x)-equals-e_xd_N(e_X)+d_N(e_x)e_X}. Now \cref{d_N(e_X)_xy-equas-d_N(e_x)_xy,d_N(e_X)_xy-equals-0_xy} follow by taking the coefficients of both sides of \cref{d_N(e_x)-equals-e_xd_N(e_X)+d_N(e_x)e_X} at $e_{xy}$.
	\end{proof}
	
	\begin{cor}\label{d_N(e_x)-properties}
		Let $\{d_n\}_{n=0}^N$ be a higher derivation of $FI(P,R)$ of order $N\in\NN$ satisfying \cref{d_e(e_x)-is-zero} for all $x\in P$ and $1\le n<N$. Then 
		\begin{align}\label{d_N(e_x)_xy=-d_N(e_x)_xy}
		d_N(e_x)_{xy}=-d_N(e_y)_{xy}.
		\end{align}
	\end{cor}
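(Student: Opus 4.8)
The plan is to read off the identity from \cref{hder-with-d(e_x)=0}\cref{d_N(e_X)_xy-equals-0_xy} applied to a two-point subset of $P$. Fix $x<y$ in $P$ and take $X=\{x,y\}$. Since $\{d_n\}_{n=0}^N$ satisfies \cref{d_e(e_x)-is-zero}, \cref{hder-with-d(e_x)=0} applies, and because both $x$ and $y$ lie in $X$, part \cref{d_N(e_X)_xy-equals-0_xy} of that lemma gives $d_N(e_X)_{xy}=0_{xy}$. On the other hand, by the very definition of $e_X$ we have $e_X=e_x+e_y$, and as $d_N$ is additive this yields $d_N(e_X)=d_N(e_x)+d_N(e_y)$; comparing the coefficients at $e_{xy}$ on both sides we get
\begin{align*}
d_N(e_X)_{xy}=d_N(e_x)_{xy}+d_N(e_y)_{xy}.
\end{align*}
Combining the two displayed facts forces $d_N(e_x)_{xy}+d_N(e_y)_{xy}=0$, which is exactly \cref{d_N(e_x)_xy=-d_N(e_y)_xy}.

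There is essentially no obstacle here: the corollary is just the additivity of $d_N$ together with part \cref{d_N(e_X)_xy-equals-0_xy} of the preceding lemma, the only thing worth noting being that $X=\{x,y\}$ is a legitimate choice of a subset of $P$ with $x,y\in X$, so that the relevant case of \cref{hder-with-d(e_x)=0} is available. If one also wants the degenerate instance $x=y$, it is immediate from $e_x=e_x^2$: expanding $d_N(e_x)=\sum_{i+j=N}d_i(e_x)d_j(e_x)$ and discarding the middle terms via \cref{d_e(e_x)-is-zero} gives $d_N(e_x)=e_xd_N(e_x)+d_N(e_x)e_x$, whence $d_N(e_x)_{xx}=2d_N(e_x)_{xx}=0$ and both sides of \cref{d_N(e_x)_xy=-d_N(e_y)_xy} vanish.
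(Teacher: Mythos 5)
Your proof is correct and is essentially identical to the paper's: the paper also deduces \cref{d_N(e_x)_xy=-d_N(e_x)_xy} from \cref{hder-with-d(e_x)=0}\cref{d_N(e_X)_xy-equals-0_xy} with $X=\{x,y\}$ together with the observation that $e_{\{x,y\}}=e_x+e_y$. Your extra remark on the degenerate case $x=y$ is a harmless (and correct) addition that the paper leaves implicit.
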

	\begin{proof}
		Indeed, \cref{d_N(e_x)_xy=-d_N(e_x)_xy} follows from \cref{hder-with-d(e_x)=0}\cref{d_N(e_X)_xy-equals-0_xy} with $X=\{x,y\}$ and the easy observation that $e_{\{x,y\}}=e_x+e_y$.
	\end{proof}

	\begin{lem}\label{decomp-Dt_bt*d'}
		Let $d=\hd d\in\hder{FI(P,R)}$. Then there is $\rho=\{\rho_n\}_{n=1}^\infty\sst FI(P,R)$ such that for all $n\in\NN$ and $x\in P$
		\begin{align}\label{d^(rho)_n(e_x)=zero}
		d^{(\rho)}_n(e_x)=0,
		\end{align}
		where $d^{(\rho)}$ is given by \cref{d^(r_k)=([r_1_1]...[r_k_k])^(-1)d,d^(r)_n=d^(r_n)_n}.
	\end{lem}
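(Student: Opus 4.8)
The plan is to construct the sequence $\rho=\{\rho_n\}_{n=1}^\infty$ inductively, so that at stage $N$ we have already chosen $\rho_1,\dots,\rho_{N-1}$ making $d^{(\rho)}_n(e_x)=0$ for all $x\in P$ and $1\le n<N$, and we then choose $\rho_N$ to kill $d^{(\rho)}_N(e_x)$. By \cref{d^(r)-properties}\cref{d^(r_k)_l=d^(r_l)_l} the value $d^{(\rho)}_n$ depends only on $\rho_1,\dots,\rho_n$, so this inductive scheme is consistent: fixing $\rho_1,\dots,\rho_{N-1}$ determines $d^{(\rho)}_n$ for $n<N$ once and for all, and $\rho_N$ only affects $d^{(\rho)}_N$ and later terms. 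Thus it suffices to solve one inductive step.

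For the inductive step, write $d'=d^{(\rho,N-1)}=([\rho_1,1]*\dots*[\rho_{N-1},{N-1}])\m*d$, which by the induction hypothesis and \cref{d^(r_k)_l=d^(r_l)_l} is a higher derivation with $d'_n(e_x)=0$ for all $x$ and $1\le n<N$; its $N$-th term is $d^{(\rho)}_N$ modified only by the yet-to-be-chosen $[\rho_N,N]$, and in fact $(([\rho_N,N])\m*d')_N=d'_N-\ad_{\rho_N}$ by \cref{d'-ast-d''-def} together with \cref{[r_n]^(-1)_n=-ad_r}, since the lower-order terms of $[\rho_N,N]\m$ vanish by \cref{([r_n]^(-1))_k=0}. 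So the problem reduces to: find $\rho_N\in FI(P,R)$ with $d'_N(e_x)=\ad_{\rho_N}(e_x)=\rho_N e_x-e_x\rho_N$ for every $x\in P$. Here the hypotheses of \cref{hder-with-d(e_x)=0} apply to $\{d'_n\}_{n=0}^N$, so we may use \cref{d_N(e_x)_xy=-d_N(e_x)_xy}: $d'_N(e_x)_{xy}=-d'_N(e_y)_{xy}$, and by \cref{hder-with-d(e_x)=0}\cref{d_N(e_X)_xy-equals-0_xy} applied with $X=\{x\}$ we get $d'_N(e_x)_{xx}=0$, i.e. $e_xd'_N(e_x)e_x=0$. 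The natural candidate is
\begin{align}\label{rho_N-candidate}
\rho_N=-\sum_{x<y}d'_N(e_x)_{xy}e_{xy},
\end{align}
and one checks directly from \cref{e_x-alpha-e_y} that $\rho_Ne_x-e_x\rho_N$ has $(u,v)$-coefficient equal to $d'_N(e_u)_{uv}$ when $u=x<v$ (contribution from $-e_x\rho_N$, using $-(-d'_N(e_x)_{xv})=d'_N(e_x)_{xv}$... wait, sign: from $\rho_Ne_x$ we pick up $e_{ux}$-terms and from $-e_x\rho_N$ the $e_{xv}$-terms), and to $-d'_N(e_v)_{uv}=d'_N(e_u)_{uv}$ when $u<v=x$, and $0$ on the diagonal; the two descriptions agree precisely because of \cref{d_N(e_x)_xy=-d_N(e_x)_xy}. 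The finitary-series condition on $\rho_N$ must be verified: for a fixed pair $a<b$, only finitely many $a\le u<v\le b$ have $d'_N(e_u)_{uv}\ne 0$, which follows because $d'_N(e_u)$ is itself a finitary series and $e_u(d'_N(e_u))e_b = \sum_{u\le v\le b} d'_N(e_u)_{uv} e_{uv}$ has finite support, together with the observation that for $a\le u$ the relevant coefficient $d'_N(e_u)_{uv}$ with $u\le v\le b$ can be rewritten via \cref{d_N(e_x)_xy=-d_N(e_x)_xy} as $-d'_N(e_v)_{uv}$ and controlled using finitariness of $d'_N(e_b)$ — this bookkeeping is the one genuinely fiddly point. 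With $\rho_N$ so defined, set $\rho=\{\rho_n\}_{n=1}^\infty$; then $d^{(\rho)}_n(e_x)=d'_n(e_x)=0$ for $n<N$ and $d^{(\rho)}_N(e_x)=(([\rho_N,N])\m*d')_N(e_x)=d'_N(e_x)-\ad_{\rho_N}(e_x)=0$, completing the induction and the proof.

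I expect the main obstacle to be exactly the verification that the series in \cref{rho_N-candidate} is finitary, i.e. that for each $x<y$ only finitely many $x\le u<v\le y$ contribute a nonzero coefficient. One has two "halves" of $\rho_N$ to control — the terms $e_{uv}$ with $u$ ranging over starting points below $y$, and those with $v$ ranging over endpoints above $x$ — and the symmetry relation $d'_N(e_u)_{uv}=-d'_N(e_v)_{uv}$ is what lets each half be bounded by the finitariness of a single series $d'_N(e_y)$ respectively $d'_N(e_x)$. Everything else (consistency of the inductive construction, the algebra of $[\rho_N,N]\m*d'$, matching coefficients of $\ad_{\rho_N}(e_x)$ with $d'_N(e_x)$) is routine given \cref{formulas-for-[r_1_1]...[r_n_n]_k}, \cref{[r_n]^(-1)_n=-ad_r}, \cref{d^(r)-properties}, \cref{hder-with-d(e_x)=0} and \cref{d_N(e_x)-properties}.
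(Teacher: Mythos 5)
Your overall strategy coincides with the paper's: build $\rho$ term by term, reduce the $N$-th step via \cref{([r_n]^(-1))_k=0,[r_n]^(-1)_n=-ad_r} to solving $d'_N(e_x)=\ad_{\rho_N}(e_x)$ for $d'=d^{(\rho,N-1)}$, and take $\rho_N=-\sum_{x<y}d'_N(e_x)_{xy}e_{xy}$, which by \cref{d_N(e_x)_xy=-d_N(e_x)_xy} is exactly the paper's choice $(\rho_N)_{xy}=d'_N(e_y)_{xy}$. The coefficient matching is also essentially the paper's computation; note only that the vanishing of $d'_N(e_x)_{uv}$ for $x\notin\{u,v\}$, which you need for the full equality $\ad_{\rho_N}(e_x)=d'_N(e_x)$, as well as the vanishing on the diagonal, come from \cref{d_N(e_x)-equals-e_xd_N(e_X)+d_N(e_x)e_X} with $X=\{x\}$ rather than from item \cref{d_N(e_X)_xy-equals-0_xy}, which is vacuous for a singleton.

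The genuine gap is exactly where you suspected it: the proof that $\rho_N\in FI(P,R)$. The mechanism you propose cannot work. Finitariness of the individual series $d'_N(e_u)$ (resp.\ $d'_N(e_v)$) gives, for a fixed rectangle $a\le u<v\le b$, only that each ``row'' $\{v:(\rho_N)_{uv}\ne0\}$ (resp.\ each ``column'' $\{u:(\rho_N)_{uv}\ne0\}$) is finite, and the single series $d'_N(e_b)$ carries no information about the coefficients $d'_N(e_v)_{uv}$ for $v\ne b$, since its own support inside the rectangle is contained in the column $v=b$. Row- and column-finiteness do not preclude an infinite support consisting of pairs with pairwise distinct first and second coordinates, so your bound ``by the finitariness of a single series $d'_N(e_y)$ respectively $d'_N(e_x)$'' does not close the argument. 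The missing idea is to bring in the idempotent $e_U$ of an \emph{infinite} subset $U\sst P$: assuming the support $S$ inside the rectangle is infinite, one uses the row- and column-finiteness to extract (as in \cite[Lemma~2]{Kh-der}) an infinite $S'\sst S$ whose sets $U$ of first and $V$ of second coordinates are infinite and disjoint, and then \cref{hder-with-d(e_x)=0}\cref{d_N(e_X)_xy-equas-d_N(e_x)_xy} with $X=U$ yields $d'_N(e_U)_{uv}=d'_N(e_u)_{uv}\ne0$ for all $(u,v)\in S'$, contradicting the finitariness of the single series $d'_N(e_U)$. This is precisely why \cref{hder-with-d(e_x)=0} is formulated for arbitrary $X\sst P$ and not only for singletons and two-element sets; without this step the induction does not go through.
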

	\begin{proof}
		Define
		\begin{align}
		(\rho_1)_{xy}&=d_1(e_y)_{xy},\label{(rho_1)_xy=d_1(e_y)_xy}\\
		(\rho_n)_{xy}&=(([\rho_1,1]*\dots *[\rho_{n-1},n-1])\m*d)_n(e_y)_{xy},\ n\in\NN,\ n>1.\label{(rho_n)_xy=(([rho_1_1]*... *[rho_n-1_n-1])^(-1)*d)_n(e_y)_xy}
		\end{align}
		We shall prove that $\rho_n\in FI(P,R)$ and \cref{d^(rho)_n(e_x)=zero} holds by induction on $n$. 
		
		Since $d_1$ is a usual derivation of $FI(P,R)$ and 
		\begin{align*}
		d_1=(\Dt_\rho)_1+d^{(\rho)}_1=\ad_{\rho_1}+d^{(\rho)}_1
		\end{align*}
		by \cref{d^(r)-properties}\cref{d=Dt_r-ast-d^(r)}, \cref{(d'-ast-d'')_1=d'_1+d''_1,(Dt_r)_1-is-ad_(r_1)}, the case $n=1$ is exactly \cite[Lemma 2]{Kh-der} (compare \cref{(rho_1)_xy=d_1(e_y)_xy} with formula (7) from \cite{Kh-der}). 
		
		Now assume that $\rho_n\in FI(P,R)$ and \cref{d^(rho)_n(e_x)=zero} is true for all $0<n<m$ and $x\in P$. In particular, $d^{(\rho,n)}$ is a well-defined higher derivation of $FI(P,R)$ for each $0<n<m$ (see \cref{d^(r_k)=([r_1_1]...[r_k_k])^(-1)d}). 
		
		We first show that $\rho_m\in FI(P,R)$. Suppose that there are $x<y$ and an infinite set $S$ of pairs $(u,v)$, such that $x\le u<v\le y$ and $(\rho_m)_{uv}\ne 0$. Observe from \cref{(rho_n)_xy=(([rho_1_1]*... *[rho_n-1_n-1])^(-1)*d)_n(e_y)_xy} that $(\rho_m)_{uv}=d^{(\rho,m-1)}_m(e_v)_{uv}$. Since $d^{(\rho,m-1)}_m(e_v)$ is a finitary series, for each $v$ there is only a finite number of $u$, such that $(u,v)\in S$. Moreover, $d^{(\rho,m-1)}_n=d^{(\rho)}_n$ for all $n<m$ in view of \cref{d^(r)-properties}\cref{d^(r_k)_l=d^(r_l)_l}. Consequently, $d^{(\rho,m-1)}_n(e_x)=0$ for all $0<n<m$ and $x\in P$ by the induction hypothesis, and thus we may apply \cref{d_N(e_x)-properties,hder-with-d(e_x)=0} to $\{d^{(\rho,m-1)}_n\}_{n=0}^m$.  We have by \cref{d_N(e_x)_xy=-d_N(e_x)_xy}
		\begin{align}\label{d^(rho_m-1)_m(e_u)_uv-ne-0_uv}
		d^{(\rho,m-1)}_m(e_u)_{uv}=-d^{(\rho,m-1)}_m(e_v)_{uv}=-(\rho_m)_{uv}\ne 0.
		\end{align}
		Since $d^{(\rho,m-1)}_m(e_u)\in FI(P,R)$, it follows that for each $u$ there is only a finite number of $v$, such that $(u,v)\in S$. Therefore, as in \cite[Lemma 2]{Kh-der} we may construct an infinite $S'\sst S$, such that the sets 
		\begin{align*}
		U=\{u\mid (u,v)\in S'\}\mbox{ and }V=\{v\mid (u,v)\in S'\}
		\end{align*}
		are infinite and disjoint. But then $d^{(\rho,m-1)}_m(e_U)_{uv}=d^{(\rho,m-1)}_m(e_u)_{uv}\ne 0$ in view of \cref{hder-with-d(e_x)=0}\cref{d_N(e_X)_xy-equas-d_N(e_x)_xy} and \cref{d^(rho_m-1)_m(e_u)_uv-ne-0_uv}, contradicting the fact that $d^{(\rho,m-1)}_m(e_U)\in FI(P,R)$.
		
		Now, under the same hypothesis assumption as above, we prove \cref{d^(rho)_n(e_x)=zero}. We have already shown that $\rho_m\in FI(P,R)$. So, using \cref{([r_n]^(-1))_k=0,[r_n]^(-1)_n=-ad_r} we have
		\begin{align}
		d^{(\rho)}_m&=([\rho_m,m]\m*\dots*[\rho_1,1]\m*d)_m\notag\\
		&=\sum_{i+j=m}([\rho_m,m]\m)_i\circ([\rho_{m-1},m-1]\m*\dots*[\rho_1,1]\m*d)_j\notag\\
		&=([\rho_m,m]\m)_m+([\rho_{m-1},m-1]\m*\dots*[\rho_1,1]\m*d)_m\notag\\
		&=-\ad_{\rho_m}+d^{(\rho,m-1)}_m.\label{d^(rho)_m=-ad_rho_m+d^(rho_m-1)_m}
		\end{align}
		Notice that
		\begin{align}
		(d^{(\rho,m-1)}_m(e_x)e_x)_{uv}&=
		\begin{cases}
		(d^{(\rho,m-1)}_m(e_x))_{ux}, & v=x,\\
		0, & v\ne x,
		\end{cases}\notag\\
		&=
		\begin{cases}
		(\rho_m)_{ux}, & v=x,\\
		0, & v\ne x,
		\end{cases}\notag\\
		&=(\rho_me_x)_{uv}.\label{d^{(rho_m-1)}_m(e_x)e_x=rho_me_x}
		\end{align}
		Moreover, since
		\begin{align*}
		d^{(\rho,m-1)}_m(e_x)_{xy}=-d^{(\rho,m-1)}_m(e_y)_{xy}=-(\rho_m)_{xy}
		\end{align*}
		by \cref{d_N(e_x)_xy=-d_N(e_x)_xy}, we similarly get that
		\begin{align}\label{e_xd^(rho_m-1)_m(e_x)=-e_x-rho_m}
		e_xd^{(\rho,m-1)}_m(e_x)=-e_x\rho_m.
		\end{align}
		Thus, in view of \cref{d_N(e_x)-equals-e_xd_N(e_X)+d_N(e_x)e_X}, \cref{d^{(rho_m-1)}_m(e_x)e_x=rho_me_x,e_xd^(rho_m-1)_m(e_x)=-e_x-rho_m}
		\begin{align*}
		d^{(\rho,m-1)}_m(e_x)=e_xd^{(\rho,m-1)}_m(e_x)+d^{(\rho,m-1)}_m(e_x)e_x=\rho_me_x-e_x\rho_m=\ad_{\rho_m}(e_x).
		\end{align*}
		Combining this with \cref{d^(rho)_m=-ad_rho_m+d^(rho_m-1)_m} we get $d^{(\rho)}_m(e_x)=0$, which completes the induction step and thus proves \cref{d^(rho)_n(e_x)=zero}.
	\end{proof}
	
	Thus, it suffices to describe the higher derivations $d$ of $FI(P,R)$ whose terms annihilate $e_x$ for all $x\in P$. We shall give an equivalent characterization of such $d$, assuming that all $d_n$ are $R$-linear.
	
	The following definition is due to Nowicky~\cite{Nowicki84-der}. 
	\begin{defn}\label{trans-map-defn}
		A sequence $\s=\hd{\s}$ of maps on $I=\{(x,y)\in P\times P\mid x\le y\}$ with values in $R$ is called a \emph{higher transitive map}, if
		\begin{enumerate}
			\item $\s_0(x,y)=1_R$ for all $x\le y$;\label{s_0(x_y)=1-for-all-x_y}
			\item $\s_n(x,y)=\sum_{i+j=n}\s_i(x,z)\s_j(z,y)$ for all $x\le z\le y$.\label{s_n(x_y)=sum-s_i(x_z)s_j(z_y)}
		\end{enumerate}
	\end{defn}
	
	\begin{rem}\label{s_n(x_x)=0-for-all-x}
		If $\s$ is a higher transitive map, then
		\begin{align}\label{s_n(xx)-is-zero}
		\s_n(x,x)=0
		\end{align}
		for all $n\in\NN$ and $x\in P$.
	\end{rem}
	\begin{proof}
		Indeed, $\s_1(x,x)=\s_1(x,x)+\s_1(x,x)$, so $\s_1(x,x)=0$. Now suppose that the equality holds for all $n<m$. Then
		\begin{align*}
		\s_m(x,x)&=\sum_{i+j=m}\s_i(x,x)\s_j(x,x)\\
		&=1_R\cdot\s_m(x,x)+\s_m(x,x)\cdot 1_R\\
		&\quad+\sum_{i+j=m,\ i,j<m}\s_i(x,x)\s_j(x,x)\\
		&=2\s_m(x,x)
		\end{align*}
		by the induction hypothesis. Thus, $\s_m(x,x)=0$.
	\end{proof}
	
	\begin{lem}\label{tl-sigma-defn}
		Given a higher transitive map $\s$, denote by $\tl\s$ the following sequence of maps $FI(P,R)\to FI(P,R)$
		\begin{align*}%\label{tl-s(af)-defn}
		\tl\s_n(\af)=\sum_{x\le y}\s_n(x,y)\af_{xy}e_{xy},
		\end{align*}
		where $n\in\NN\cup\{0\}$ and $\af\in FI(P,R)$. Then $\tl\s\in\hder{FI(P,R)}$.
	\end{lem}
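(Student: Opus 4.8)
The plan is to verify the two defining conditions of a higher derivation from \cref{hder-sec} by a direct coefficientwise computation. First I would note that each $\tl\s_n$ is a well-defined additive map $FI(P,R)\to FI(P,R)$: for $\af\in FI(P,R)$ the support of $\tl\s_n(\af)$ is contained in that of $\af$ (since $\s_n(x,y)\af_{xy}\ne 0$ forces $\af_{xy}\ne 0$), so $\tl\s_n(\af)$ is again finitary, and additivity is clear from $(\af+\bt)_{xy}=\af_{xy}+\bt_{xy}$. The requirement $\tl\s_0=\id_{FI(P,R)}$ is then immediate from \cref{trans-map-defn}\cref{s_0(x_y)=1-for-all-x_y}, as $\tl\s_0(\af)=\sum_{x\le y}1_R\af_{xy}e_{xy}=\af$.

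The main point is the higher Leibniz rule $\tl\s_n(\af\bt)=\sum_{i+j=n}\tl\s_i(\af)\tl\s_j(\bt)$ for $n\in\NN$ and $\af,\bt\in FI(P,R)$, which I would establish by comparing, for a fixed pair $x\le y$, the coefficients of $e_{xy}$ on both sides. Using the convolution formula \cref{conv-in-I(P_R)}, the left-hand coefficient is $\s_n(x,y)\sum_{x\le z\le y}\af_{xz}\bt_{zy}$. On the right, \cref{conv-in-I(P_R)} gives $\bigl(\tl\s_i(\af)\tl\s_j(\bt)\bigr)_{xy}=\sum_{x\le z\le y}\s_i(x,z)\af_{xz}\,\s_j(z,y)\bt_{zy}$; summing over $i+j=n$ and interchanging the two (finite) sums yields
\[
\sum_{i+j=n}\bigl(\tl\s_i(\af)\tl\s_j(\bt)\bigr)_{xy}=\sum_{x\le z\le y}\Bigl(\sum_{i+j=n}\s_i(x,z)\s_j(z,y)\Bigr)\af_{xz}\bt_{zy}.
\]
By \cref{trans-map-defn}\cref{s_n(x_y)=sum-s_i(x_z)s_j(z_y)}, the inner sum equals $\s_n(x,y)$ for every $z$ with $x\le z\le y$, so the right-hand coefficient collapses to $\s_n(x,y)\sum_{x\le z\le y}\af_{xz}\bt_{zy}$, i.e. it matches the left-hand one. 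Hence all coefficients agree and $\tl\s\in\hder{FI(P,R)}$.

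I do not expect any genuine obstacle here; the only subtlety is that every computation is performed inside a single coefficient $e_{xy}$, where the sum over intermediate elements $z$ with $x\le z\le y$ is finite by the finitary condition defining $FI(P,R)$ (the same fact that makes $FI(P,R)$ closed under convolution), which is what justifies the interchange of summations, and one should keep in mind that $\tl\s_n$ must be checked to take values in $FI(P,R)$ itself and not merely in the module of arbitrary formal series.
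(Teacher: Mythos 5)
Your proof is correct and is precisely the direct coefficientwise verification that the paper itself declares ``easy to verify'' and delegates to the analogous computation in Nowicki's work, so the approach is essentially the same (you simply write out the details the paper omits). The only microscopic imprecision is that the sum over $x\le z\le y$ is not literally finite but has finitely many nonzero terms by the finitary condition, which is exactly the fact you invoke.
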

	\begin{proof}
		It is obvious that $\tl\s_n$ is well-defined and additive. The fact that $\tl\s$ satisfies \cref{d_n(rs)=sum-d_i(r)d_j(s)} of the definition of a higher derivation is easy to verify (see, for example the proof of \cite[Lemma 3.6]{Nowicki84-der}).
	\end{proof}
	
	\begin{lem}\label{d(e_x)=0<=>s=tl-sigma}
		Let $d=\hd d\in\hder{FI(P,R)}$ be $R$-linear. Then 
		\begin{align}\label{d_n(e_x)-s-zero}
		d_n(e_x)=0
		\end{align}
		for all $n\in\NN$ and $x\in P$ if and only if $d=\tl\s$ for some transitive map $\s$.
	\end{lem}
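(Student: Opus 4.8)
The plan is to prove the two implications of \cref{d(e_x)=0<=>s=tl-sigma} separately, with essentially all of the work lying in the forward direction; the converse is a one-line verification.

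For the converse, suppose $d=\tl\s$ for some higher transitive map $\s$. Since $(e_x)_{uv}$ equals $1_R$ exactly when $u=v=x$ and is $0$ otherwise, the formula defining $\tl\s_n$ in \cref{tl-sigma-defn} yields $\tl\s_n(e_x)=\s_n(x,x)e_{xx}$, and this vanishes for every $n\in\NN$ by \cref{s_n(x_x)=0-for-all-x}. Hence \cref{d_n(e_x)-s-zero} holds.

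For the forward direction, assume \cref{d_n(e_x)-s-zero}. The first step I would take is to record the ``freezing identity''
\[
d_n(e_u\af e_v)=e_u\, d_n(\af)\, e_v\qquad\text{for all }\af\in FI(P,R),\ u,v\in P,\ n\ge 0,
\]
which follows by applying the higher-derivation rule \cref{d_n(rs)=sum-d_i(r)d_j(s)} to the products $e_u\cdot(\af e_v)$ and $\af\cdot e_v$ and discarding, by the standing hypothesis, every term in which a component $d_i$ with $i\ge 1$ lands on $e_u$ or $e_v$. Specializing to $\af=e_{xy}$, $u=x$, $v=y$ with $x\le y$ and invoking \cref{e_x-alpha-e_y} shows that $d_n(e_{xy})$ is always a scalar multiple of $e_{xy}$, which lets me \emph{define} $\s_n(x,y):=d_n(e_{xy})_{xy}$ for $x\le y$ and $n\in\NN$, together with $\s_0(x,y):=1_R$ (consistent with $d_0=\id$). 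I would then check that $\s=\hd\s$ is a higher transitive map in the sense of \cref{trans-map-defn}: condition \cref{s_0(x_y)=1-for-all-x_y} holds by construction, while \cref{s_n(x_y)=sum-s_i(x_z)s_j(z_y)} is obtained by expanding $d_n(e_{xy})=d_n(e_{xz}e_{zy})=\sum_{i+j=n}d_i(e_{xz})d_j(e_{zy})$ for $x\le z\le y$ and reading off the coefficient at $e_{xy}$. By \cref{tl-sigma-defn} this already gives $\tl\s\in\hder{FI(P,R)}$.

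Finally, to establish $d=\tl\s$, fix $\af\in FI(P,R)$, $n\ge 0$ and a pair $u\le v$ (for $u\not\le v$ both $d_n(\af)_{uv}$ and $\tl\s_n(\af)_{uv}$ are zero). Combining the freezing identity, the $R$-linearity of $d_n$, \cref{e_x-alpha-e_y} and the definition of $\s$,
\[
e_u\, d_n(\af)\, e_v=d_n(e_u\af e_v)=d_n(\af_{uv}e_{uv})=\af_{uv}\,\s_n(u,v)\,e_{uv},
\]
so $d_n(\af)_{uv}=\s_n(u,v)\af_{uv}$; since a finitary series is determined by its coefficients, this says precisely $d_n(\af)=\tl\s_n(\af)$, and letting $n$ range over $\NN\cup\{0\}$ gives $d=\tl\s$. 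Everything here is routine manipulation with the idempotents $e_x$; the only place the hypothesis \cref{d_n(e_x)-s-zero} is genuinely used is the freezing identity, whose cross terms do not vanish in general — which is exactly why the preliminary reduction in \cref{decomp-Dt_bt*d'} is needed before this characterization can be applied.
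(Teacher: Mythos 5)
Your proof is correct and follows essentially the same route as the paper: the ``freezing identity'' $d_n(e_u\af e_v)=e_u d_n(\af)e_v$ is exactly the paper's computation $\sum_{i+j+k=n}d_i(e_x)d_j(\af)d_k(e_y)=e_xd_n(\af)e_y$, and the definition $\s_n(x,y)=d_n(e_{xy})_{xy}$ together with the verification of \cref{trans-map-defn} via $e_{xy}=e_{xz}e_{zy}$ matches the paper's argument. No gaps.
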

	\begin{proof}
		Clearly, $d=\tl\s$ is $R$-linear and satisfies \cref{d_n(e_x)-s-zero} in view of \cref{s_n(xx)-is-zero}.
		
		Let us prove the converse. Assume \cref{d_n(e_x)-s-zero} and define
		\begin{align}\label{s_n(x_y)=d_n(e_xy)_xy}
		\s_n(x,y)=d_n(e_{xy})_{xy}.
		\end{align}
		Observe from \cref{d_n(rs)=sum-d_i(r)d_j(s)} of the definition of a higher derivation, \cref{e_x-alpha-e_y,d_n(e_x)-s-zero} that, given $\af\in FI(P,R)$ and $x\le y$,
		\begin{align*}
		d_n(\af_{xy}e_{xy})=d_n(e_x\af e_y)=\sum_{i+j+k=n}d_i(e_x)d_j(\af)d_k(e_y)=e_xd_n(\af)e_y=d_n(\af)_{xy}e_{xy}.
		\end{align*}
		Hence, using $R$-linearity, we conclude that
		\begin{align}\label{d_n(af)_xy=s_n(x_y)af_xy}
		d_n(\af)_{xy}=d_n(\af_{xy}e_{xy})_{xy}=\af_{xy}d_n(e_{xy})_{xy}=\s_n(x,y)\af_{xy},
		\end{align}
		so $d=\tl\s$. It remains to verify \cref{s_0(x_y)=1-for-all-x_y,s_n(x_y)=sum-s_i(x_z)s_j(z_y)} of \cref{trans-map-defn}. Condition \cref{s_0(x_y)=1-for-all-x_y} is simply the statement that $(e_{xy})_{xy}=1_R$ by \cref{s_n(x_y)=d_n(e_xy)_xy}. Now take $x\le z\le y$. Then, $e_{xy}=e_{xz}e_{zy}$ in view of \cref{e_xy-cdot-e_uv}, so that by \cref{conv-in-I(P_R),s_n(x_y)=d_n(e_xy)_xy,d_n(af)_xy=s_n(x_y)af_xy} and \cref{d_n(rs)=sum-d_i(r)d_j(s)} of the definition of a higher derivation
		\begin{align*}
		\s_n(x,y)=d_n(e_{xy})_{xy}&=\sum_{i+j=n}(d_i(e_{xz})d_j(e_{zy}))_{xy}\\
		&=\sum_{i+j=n}\sum_{x\le u\le y}d_i(e_{xz})_{xu}d_j(e_{zy})_{uy}\\
		&=\sum_{i+j=n}\sum_{x\le u\le y}\s_i(x,u)(e_{xz})_{xu}\s_j(u,y)(e_{zy})_{uy}\\
		&=\sum_{i+j=n}\s_i(x,z)\s_j(z,y),
		\end{align*}
		proving \cref{trans-map-defn}\cref{s_n(x_y)=sum-s_i(x_z)s_j(z_y)}.
	\end{proof}
	
	\begin{thrm}\label{d=Dt_rho*tl-sigma}
		Each $R$-linear higher derivation of $FI(P,R)$ is of the form $\Dt_\rho*\tl\s$ for some $\rho=\{\rho_n\}_{n=1}^\infty\sst FI(P,R)$ and a higher transitive map $\s$.
	\end{thrm}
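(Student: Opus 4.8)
The plan is to assemble the lemmas of \cref{hider-FI-sec} together with the general group-theoretic machinery of \cref{hder-sec}. Let $d=\hd d$ be an $R$-linear higher derivation of $FI(P,R)$. By \cref{decomp-Dt_bt*d'} there is a sequence $\rho=\{\rho_n\}_{n=1}^\infty\sst FI(P,R)$ such that the associated higher derivation $d^{(\rho)}$, defined via \cref{d^(r_k)=([r_1_1]...[r_k_k])^(-1)d,d^(r)_n=d^(r_n)_n}, satisfies $d^{(\rho)}_n(e_x)=0$ for all $n\in\NN$ and $x\in P$. On the other hand, \cref{d^(r)-properties}\cref{d=Dt_r-ast-d^(r)} gives the factorization $d=\Dt_\rho*d^{(\rho)}$. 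Hence it only remains to identify the factor $d^{(\rho)}$.

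First I would check that $d^{(\rho)}$ is again $R$-linear. Each building block $[\rho_k,k]_n$ is $R$-linear: by \cref{[r_k]_0(x)-def,[r_k]_n(x)-def} it is the identity map, the zero map, or the map $x\mapsto\rho_k^lx-\rho_k^{l-1}x\rho_k$, and the last of these is $R$-linear because $R$ acts centrally on $FI(P,R)$. Since the group operation $*$ on $\hder{FI(P,R)}$ and the passage to inverses are assembled from composition and addition of the component maps, the components $(([\rho_1,1]*\dots *[\rho_n,n])\m)_j$ are $R$-linear as well; composing these with the $R$-linear maps $d_k$ shows that every $d^{(\rho)}_n=d^{(\rho,n)}_n$ is $R$-linear.

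With $R$-linearity established, I would invoke \cref{d(e_x)=0<=>s=tl-sigma} for $d^{(\rho)}$: being an $R$-linear higher derivation all of whose terms annihilate the idempotents $e_x$, it is of the form $d^{(\rho)}=\tl\s$ for some higher transitive map $\s$ in the sense of \cref{trans-map-defn}. Substituting this into the factorization $d=\Dt_\rho*d^{(\rho)}$ yields $d=\Dt_\rho*\tl\s$, which is the assertion of the theorem.

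I expect no serious obstacle at this stage, since the substantive work has already been carried out: the inductive construction of $\rho$ that annihilates the $d^{(\rho)}_n(e_x)$ in \cref{decomp-Dt_bt*d'}, and the recovery of the higher transitive map from the coefficients $d_n(e_{xy})_{xy}$ in \cref{d(e_x)=0<=>s=tl-sigma}. The only point demanding a little care is the routine bookkeeping verification that the operation $d\mapsto d^{(\rho)}$ preserves $R$-linearity, which is exactly what licenses the application of \cref{d(e_x)=0<=>s=tl-sigma} to $d^{(\rho)}$.
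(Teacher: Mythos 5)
Your proposal is correct and follows exactly the route of the paper's own (one-line) proof: \cref{decomp-Dt_bt*d'} produces $\rho$, \cref{d^(r)-properties}\cref{d=Dt_r-ast-d^(r)} gives $d=\Dt_\rho*d^{(\rho)}$, and \cref{d(e_x)=0<=>s=tl-sigma} identifies $d^{(\rho)}=\tl\s$. The explicit verification that $d^{(\rho)}$ remains $R$-linear is a point the paper leaves implicit, and your argument for it is sound.
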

	\begin{proof}
		This follows from \cref{d^(r)-is-hder,d=Dt_r-ast-d^(r)} of \cref{d^(r)-properties} and \cref{d(e_x)=0<=>s=tl-sigma,decomp-Dt_bt*d'}.
	\end{proof}
	
	\section{Acknowledgements}
	
	The authors are grateful to the reviewer whose suggestions helped them to improve the readability of the paper. 
	
	\bibliography{bibl}{}
	\bibliographystyle{acm}
	
\end{document}